\documentclass[12pt]{article}

\usepackage{amsmath,amssymb,amsbsy,amsfonts,amsthm,latexsym,
            amsopn,amstext,amsxtra,amscd,stmaryrd,fullpage}

\usepackage[mathscr]{eucal}
\usepackage{color}
\begin{document}

\newtheorem{theorem}{Theorem}
\newtheorem{lemma}[theorem]{Lemma}
\newtheorem{corollary}[theorem]{Corollary}
\newtheorem{proposition}[theorem]{Proposition}

\theoremstyle{definition}
\newtheorem*{definition}{Definition}
\newtheorem*{remark}{Remark}
\newtheorem*{example}{Example}


\def\cA{\mathcal A}
\def\cB{\mathcal B}
\def\cC{\mathcal C}
\def\cD{\mathcal D}
\def\cE{\mathcal E}
\def\cF{\mathcal F}
\def\cG{\mathcal G}
\def\cH{\mathcal H}
\def\cI{\mathcal I}
\def\cJ{\mathcal J}
\def\cK{\mathcal K}
\def\cL{\mathcal L}
\def\cM{\mathcal M}
\def\cN{\mathcal N}
\def\cO{\mathcal O}
\def\cP{\mathcal P}
\def\cQ{\mathcal Q}
\def\cR{\mathcal R}
\def\cS{\mathcal S}
\def\cU{\mathcal U}
\def\cT{\mathcal T}
\def\cV{\mathcal V}
\def\cW{\mathcal W}
\def\cX{\mathcal X}
\def\cY{\mathcal Y}
\def\cZ{\mathcal Z}


\def\sA{\mathscr A}
\def\sB{\mathscr B}
\def\sC{\mathscr C}
\def\sD{\mathscr D}
\def\sE{\mathscr E}
\def\sF{\mathscr F}
\def\sG{\mathscr G}
\def\sH{\mathscr H}
\def\sI{\mathscr I}
\def\sJ{\mathscr J}
\def\sK{\mathscr K}
\def\sL{\mathscr L}
\def\sM{\mathscr M}
\def\sN{\mathscr N}
\def\sO{\mathscr O}
\def\sP{\mathscr P}
\def\sQ{\mathscr Q}
\def\sR{\mathscr R}
\def\sS{\mathscr S}
\def\sU{\mathscr U}
\def\sT{\mathscr T}
\def\sV{\mathscr V}
\def\sW{\mathscr W}
\def\sX{\mathscr X}
\def\sY{\mathscr Y}
\def\sZ{\mathscr Z}


\def\fA{\mathfrak A}
\def\fB{\mathfrak B}
\def\fC{\mathfrak C}
\def\fD{\mathfrak D}
\def\fE{\mathfrak E}
\def\fF{\mathfrak F}
\def\fG{\mathfrak G}
\def\fH{\mathfrak H}
\def\fI{\mathfrak I}
\def\fJ{\mathfrak J}
\def\fK{\mathfrak K}
\def\fL{\mathfrak L}
\def\fM{\mathfrak M}
\def\fN{\mathfrak N}
\def\fO{\mathfrak O}
\def\fP{\mathfrak P}
\def\fQ{\mathfrak Q}
\def\fR{\mathfrak R}
\def\fS{\mathfrak S}
\def\fU{\mathfrak U}
\def\fT{\mathfrak T}
\def\fV{\mathfrak V}
\def\fW{\mathfrak W}
\def\fX{\mathfrak X}
\def\fY{\mathfrak Y}
\def\fZ{\mathfrak Z}


\def\C{{\mathbb C}}
\def\F{{\mathbb F}}
\def\K{{\mathbb K}}
\def\L{{\mathbb L}}
\def\N{{\mathbb N}}
\def\Q{{\mathbb Q}}
\def\R{{\mathbb R}}
\def\Z{{\mathbb Z}}
\def\E{{\mathbb E}}
\def\T{{\mathbb T}}
\def\P{{\mathbb P}}
\def\D{{\mathbb D}}


\def\eps{\varepsilon}
\def\mand{\qquad\mbox{and}\qquad}
\def\\{\cr}
\def\({\left(}
\def\){\right)}
\def\[{\left[}
\def\]{\right]}
\def\<{\langle}
\def\>{\rangle}
\def\fl#1{\left\lfloor#1\right\rfloor}
\def\rf#1{\left\lceil#1\right\rceil}
\def\le{\leqslant}
\def\ge{\geqslant}
\def\ds{\displaystyle}

\def\xxx{\vskip5pt\hrule\vskip5pt}
\def\yyy{\vskip5pt\hrule\vskip2pt\hrule\vskip5pt}
\def\imhere{ \xxx\centerline{\sc I'm here}\xxx }

\newcommand{\comm}[1]{\marginpar{
\vskip-\baselineskip \raggedright\footnotesize
\itshape\hrule\smallskip#1\par\smallskip\hrule}}


\def\e{\mathbf{e}}
\def\sPrc{{\displaystyle \sP_r^{(c)}}}


\title{\bf  Zeros of random linear combinations of OPUC with complex Gaussian coefficients}

\author{
{\sc Aaron M.~Yeager} \\
{Department of Mathematics, Oklahoma State University} \\
{Stillwater, OK 74078 USA} \\
{\tt aaron.yeager@okstate.edu}}

\maketitle

\begin{abstract}
We study zero distribution of random linear combinations of the form
$$P_n(z)=\sum_{j=0}^n\eta_j\phi_j(z),$$
in any Jordan region $\Omega \subset \mathbb C$. The basis functions $\phi_j$ are orthogonal polynomials on the unit circle (OPUC) that are real-valued on the real line, and $\eta_0,\dots,\eta_n$ are complex-valued iid Gaussian random variables.  We derive an explicit intensity function for the number of zeros of $P_n$ in $\Omega$ for each fixed $n$. Using the Christoffel-Darboux formula, the intensity function takes a very simple shape.  Moreover, we give the limiting value of the intensity function when the orthogonal polynomials are associated to Szeg\H{o} weights.
\end{abstract}

\textbf{Keywords:} Random Polynomials, Orthogonal Polynomials, Christoffel-Darboux Formula, Szeg\H{o} Weights.

\section{Introduction}

The study of the expected number of real zeros of polynomials $P_n(z)=\sum_{j=0}^n\eta_jz^j$ with random coefficients, called \emph{random algebraic polynomials}, dates back to the 1930's.   In 1932, Bloch and P\'{o}lya \cite{BP} showed that when $\{\eta_j\}$ are iid random variables that take values from the set $\{-1,0,1\}$ with equal probabilities, the expected number of real zeros is $O(\sqrt{n})$.  Other early advancements in the subject were later made by Littlewood and Offord \cite{LO}, Kac \cite{K1}, \cite{K2}, Rice \cite{R}, Erd\H{o}s and Offord \cite{EO}, and many others.  For a nice history of the early progress in this topic, we refer the reader to the books by Bharucha-Reid and Sambandham \cite{BRS} and by Farahmand \cite{FB}.

There has been a lot of work done in producing formulas for the density function, called the intensity function, for the expected value of the number of zeros, whether real or complex, of $P_n(z)$.  In the 1940's Kac's \cite{K1}, \cite{K2} gave a formula for the intensity function of the expected number of real zeros of $P_n(z)$ when each $\eta_j$ are independent real Gaussian coefficients.  Using that formula he was able to show that the expected number of real roots of the random algebraic polynomial is asymptotic to $2\pi^{-1}\log n$ as $n\rightarrow \infty$.  The error term in his asymptotic was further sharpened by Hammersley \cite{HM}, Wang \cite{WG}, Edelman and Kostlan \cite{EK}, and Wilkins \cite{WL}.

Extending Kac's formula, in 1995 Shepp and Vanderbei \cite{SV} produced a formula for the intensity function in the complex plane for the expected number of zeros of $P_n(z)$ when the random variables are real-valued iid standard Gaussian. They  also obtained  the limit of the intensity function as $n\rightarrow \infty$.  Under the assumption that the random variables are from the domain of attraction of a  stable law, in 1997 their result  was generalized by Ibragimov and Zeitouni \cite{IZ}.

Some authors have  produced formulas for the intensity function of $P_n(z)$ when the random variables are complex-valued iid Gaussian.  Results in this direction can be found in the works of Farahmand \cite{F},\cite{F1}, Farahmand and Jahangiri \cite{KJ}, and Farahmand and Grigorash \cite{FG}.

Others have derived formulas for the intensity function for what are known as Gaussian Analytic Functions (GAF) $P(z)=\sum_{j=0}^{\infty}\eta_j f_j(z)$, where the $f_j$'s are square summable analytic functions on a domain, and the $\eta_j$'s are iid  Gaussian random variables, in terms of the distributional Laplacian.  For the case when the random variables are complex-valued iid Gaussian, in 2000 Hough, Krishnapur, Peres, and Vir$\acute{\text{a}}$g (Section 2.4.2, pp. 24-29 of \cite{ZGAF}) and Feldheim (Theorem 2, p. 6 of \cite{FL}) derived the intensity function of zeros. Feldheim obtained the intensity function of zeros for a GAF in the same paper (Theorem 3, p. 7 of \cite{FL}) when the $f_j$'s are real-valued on the real line and the $\eta_j$'s are real-valued iid standard Gaussian random variables.

In 2015, Vanderbei \cite{CZRS} produced an explicit formula of the intensity function for  finite sums of real-valued iid standard Gaussian random variables with the spanning functions taken to be entire functions that are real-valued on the real line.  Vanderbei also gave the limiting intensity function when the spanning functions are Weyl polynomials, Taylor polynomials, and the truncated Fourier series.

Using Vanderbei's method, in a recent paper the author \cite{AY} derived an explicit formula for the expectation of the number of zeros of a random sum in a Jordan domain when the random variables are complex-valued iid Gaussian and the spanning functions are entire functions that are real-valued on the real line.  This formula for the intensity function was presented by the author and on the same day presented in a slightly different form by Andrew Ledoan \cite{AL} at the $15^{\text{th}}$ International Conference in Approximation Theory in San Antonio, TX, on May $25^{\text{th}}$.  As an application of the author's formula, in \cite{AY} the spanning functions were taken to be orthogonal polynomials on the real line (OPRL).  Using the Christoffel-Darboux formula to simplify the intensity function, the shape of the intensity function became so manageable that when the OPRL were associated to a general weight class, using classical results in \cite{SZ}, and modifications of them, the limiting value of the intensity function was obtained.

For results concerning the real zeros of random orthogonal polynomials with the random coefficients being real-valued iid standard Gaussian, we refer the reader to the works of Das \cite{D}, Das and Bhatt \cite{DB}, Lubinsky, Pritsker, and Xie \cite{LPX}, \cite{LPX2}, and Pritsker and Xie \cite{PX}.

In this paper, we give the analogues of the results from \cite{AY} concerning OPRL for orthogonal polynomials on the unit circle (OPUC). The \emph{orthogonal polynomials on the unit circle associated to a weight $W(\theta)$}, where $W(\theta)$ is a non-negative $2\pi$ periodic function that is Lebesgue integrable on $[-\pi,\pi]$ such that
$$\int_{-\pi}^{\pi} W(\theta)\  d\theta >0,$$
are polynomials $\{\phi_n(z)\}$ that satisfy
$$\frac{1}{2\pi}\int_{-\pi}^{\pi}W(\theta) \phi_n(e^{i\theta})\overline{\phi_m(e^{i\theta})}\ d\theta =\delta_{nm},$$
for all $n,m\in \N\cup \{0\}$. It is known that each $\phi_n(z)$ is a polynomial of exact degree $n$, and that the leading coefficient of $\phi_n(z)$, denoted as $\kappa_n$, is real and positive.  Given these properties, the polynomials $\{\phi_n(z)\}$ are uniquely determined.

Many examples and properties of OPUC are explored in the books by Szeg\H{o} \cite{SZ} and Simon \cite{BS}.  One example of an OPUC that we have already mentioned are the monomials, that is $z^n$ for $n\in \N\cup\{0\}$.  Hence for case when the random variables are complex-valued iid Gaussian and the spanning functions are the monomials, the works of  Farahmand \cite{F}, \cite{F1} and  Farahmand and Jahangiri \cite{KJ}, provide formulas for the intensity function and its limiting value.  The paper of Shiffman and Zelditch \cite{SHZ2} mentions a heuristic argument that provides the intensity function and its asymptotic for random polynomials spanned by OPUC associated to analytic weights in terms of the distributional Laplacian.

Other authors have studied the asymptotic zero distribution for random polynomials spanned by orthogonal polynomials with respect to various measures.   There has also been work done in the higher dimensional analogs of these settings, see Shiffman and Zelditch \cite{SHZ1}-\cite{SHZ3}, Bloom \cite{BL1} and \cite{BL2}, Bloom and Shiffman \cite{BLSH}, Bloom and Levenberg \cite{BLL}, Bayraktar \cite{BY}, and Pritsker \cite{P1}, \cite{P2}.

To prove our first theorem, we will use Theorem 1 from \cite{AY}. For convenience of the reader, we reformulate the statement of this theorem.

Let $\{f_j(z)\}_{j=0}^n$ be a sequence of entire functions in the complex plane that are real-valued on the real line.  Let
\begin{equation}\label{P}
P_n(z)=\sum_{j=0}^n \eta_j f_j(z), \ \ \ \ z\in\C,
\end{equation}
where $n$ is a fixed integer, and $\eta_j=\alpha_j + i \beta_j$, $j=0,1, \dots, n$, with $\{\alpha_j\}_{j=0}^n$ and $\{\beta_j\}_{j=0}^n$ being sequences of independent standard normal random variables. Let $N_n(\Omega)$ denote the (random) number of zeros of $P_n$ in a Jordan region $\Omega$ of the complex plane, and let the mathematical expectation be denoted by $\E$.
The formula we need is expressed in terms of the kernels
\begin{equation}\label{KN0}
K_{n}(z,w)=\sum_{j=0}^{n}f_j(z)\overline{f_j(w)},\ \ \ \ \ \ \ K_{n}^{(0,1)}(z,w)=\sum_{j=0}^{n} f_j(z)\overline{f_j^{\prime}(w)},
\end{equation}
and
\begin{equation}\label{KN2}
K_{n}^{(1,1)}(z,w)=\sum_{j=0}^{n}f_j^{\prime}(z)\overline{f_j^{\prime}(w)}.
\end{equation}

The mentioned theorem in \cite{AY} states that
for each Jordan region $\Omega \subset \{z\in \C : K_{n}(z,z)\neq 0\}$, the intensity function $h_n(z)$ for the random sum \eqref{P} with complex-valued iid Gaussian coefficients, is given by
$$\E[N_n(\Omega)]=\int_{\Omega}h_n(x,y)\ dx \ dy ,$$
with
\begin{equation}\label{thm2.1}
h_n(x,y)=h_n(z)=\frac{K_{n}^{(1,1)}(z,z)K_{n}(z,z)-\left|K_{n}^{(0,1)}(z,z)\right|^2}{\pi \left(K_{n}(z,z)\right)^2},
\end{equation}
where the kernels $K_n(z,z)$, $K_n^{(0,1)}(z,z)$, and $K_n^{(1,1)}(z,z)$, are defined  in \eqref{KN0} and \eqref{KN2}.

We set $f_j(z)=\phi_j(z)$ in \eqref{P}, where $\phi_j(z)$ are OPUC and study the expected number of zeros of
\begin{equation}\label{PnOPUC}
P_n(z)=\sum_{j=0}^n \eta_j \phi_j(z)
\end{equation}
in a Jordan region $\Omega$, where $n$ is a fixed integer, and $\eta_j=\alpha_j + i \beta_j$, $j=0,1, \dots, n$, with $\{\alpha_j\}_{j=0}^n$ and $\{\beta_j\}_{j=0}^n$ being sequences of independent standard normal random variables.

We note that when the weight function $W(\theta)$ associated to the OPUC $\{\phi_j(z)\}$ is an even function, the coefficients of each $\phi_j(z)$, $j=0,1,\dots$, are real.

Appealing to the Christoffel-Darboux formula for OPUC to simplify the kernels $K_n(z,z)$, $K_n^{(0,1)}(z,z)$, and $K_n^{(1,1)}(z,z)$ as defined by \eqref{KN0} and \eqref{KN2}, and applying \eqref{thm2.1} we obtain the following:
\begin{theorem}\label{OPUC}
Let $W(\theta)$ be an even weight function associated to the OPUC $\{\phi_j(z)\}$.  When $|z|\neq 1$, the intensity function $h_n^P(z)$ for the random orthogonal polynomial \eqref{PnOPUC} spanned by the $\phi_j$'s with complex-valued iid Gaussian coefficients, simplifies to
\begin{align*}
h_n^P(z)
&=\frac{1}{\pi\left(1-|z|^2 \right)^2}\Bigg(1-\frac{\left(1-|z|^2 \right)^2\left|\phi_{n+1}^{*}(z)\phi_{n+1}^{\prime}(z)-\phi_{n+1}^{*\ \prime}(z)\phi_{n+1}(z)  \right|^2}{\left(\left|\phi_{n+1}^{*}(z)\right|^2-\left|\phi_{n+1}(z)\right|^2\right)^2}   \Bigg),
\end{align*}
where $\phi_n^{*}(z)=z^n \overline{\phi_n\left(\frac{1}{\bar{z}}\right)}$.
\end{theorem}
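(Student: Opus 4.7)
The plan is to apply formula \eqref{thm2.1} with $f_j=\phi_j$. The hypothesis is satisfied because, when $W(\theta)$ is even, the recurrence coefficients of the associated OPUC are real, hence each $\phi_j$ has real coefficients and is real-valued on $\R$. The central tool will be the Christoffel--Darboux formula for OPUC,
\begin{equation*}
K_n(z,w)=\sum_{j=0}^{n}\phi_j(z)\overline{\phi_j(w)}=\frac{\phi_{n+1}^{*}(z)\overline{\phi_{n+1}^{*}(w)}-\phi_{n+1}(z)\overline{\phi_{n+1}(w)}}{1-z\bar w},
\end{equation*}
valid whenever $z\bar w\neq 1$. Setting $w=z$ immediately gives
\begin{equation*}
K_n(z,z)=\frac{\left|\phi_{n+1}^{*}(z)\right|^2-\left|\phi_{n+1}(z)\right|^2}{1-|z|^2},
\end{equation*}
which will handle the denominator of \eqref{thm2.1}.

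To handle the numerator of \eqref{thm2.1}, I would obtain closed forms for $K_n^{(0,1)}(z,z)$ and $K_n^{(1,1)}(z,z)$ by differentiating the CD formula. Observing that $K_n^{(0,1)}(z,z)=\partial_{\bar w}K_n(z,w)\big|_{w=z}$ and $K_n^{(1,1)}(z,z)=\partial_z\partial_{\bar w}K_n(z,w)\big|_{w=z}$, each derivative splits by the quotient rule into a piece from differentiating the CD numerator and a piece from differentiating $1/(1-z\bar w)$. After setting $w=z$, both pieces are expressible entirely in terms of $\phi_{n+1}(z)$, $\phi_{n+1}^{*}(z)$, $\phi_{n+1}'(z)$, $\phi_{n+1}^{*\ \prime}(z)$, and powers of $(1-|z|^2)$. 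The restriction $|z|\neq 1$ is used precisely to keep $1-|z|^2$ nonzero so that these manipulations are legitimate.

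The final step is substitution into \eqref{thm2.1} and algebraic simplification. The denominator $\pi(K_n(z,z))^2$ contributes $\pi(\left|\phi_{n+1}^{*}(z)\right|^2-\left|\phi_{n+1}(z)\right|^2)^2/(1-|z|^2)^2$. In the numerator, the product $K_n^{(1,1)}(z,z)K_n(z,z)-\left|K_n^{(0,1)}(z,z)\right|^2$ unfolds into many cross terms; after clearing the common factor $(1-|z|^2)^{-2}$, the terms should regroup so that one part reproduces $(\left|\phi_{n+1}^{*}(z)\right|^2-\left|\phi_{n+1}(z)\right|^2)^2/(1-|z|^2)^2$ (yielding the leading ``$1$'' inside the parentheses of the statement), while the remaining mixed terms combine into the squared Wronskian-type quantity $\bigl|\phi_{n+1}^{*}(z)\phi_{n+1}'(z)-\phi_{n+1}^{*\ \prime}(z)\phi_{n+1}(z)\bigr|^2$.

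The main obstacle I anticipate is the algebraic bookkeeping in this last substitution and cancellation. Care is needed to keep holomorphic and anti-holomorphic derivatives distinct (in particular, $\overline{\phi_{n+1}'(z)}$ arises as $\partial_{\bar w}\overline{\phi_{n+1}(w)}\big|_{w=z}$), and to recognize that the surviving off-diagonal combination is precisely a Wronskian in $\phi_{n+1}$ and $\phi_{n+1}^{*}$. Once the cross terms are organized correctly, the stated formula for $h_n^P(z)$ should fall out.
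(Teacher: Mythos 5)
Your proposal follows essentially the same route as the paper: verify that an even weight gives real coefficients (so Theorem 1 of \cite{AY} applies), evaluate $K_n(z,z)$ from the Christoffel--Darboux formula at $w=z$, obtain $K_n^{(0,1)}(z,z)$ and $K_n^{(1,1)}(z,z)$ by differentiating the CD identity in $\bar w$ and then in $z$ before setting $w=z$, and substitute into \eqref{thm2.1} so that the numerator collapses to $\bigl(K_n(z,z)\bigr)^2/(1-|z|^2)^2$ minus the squared Wronskian term. The only part left implicit is the final algebraic cancellation, which you correctly identify as the bookkeeping step and which works out exactly as you predict.
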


We conclude the paper by giving the limiting value of the intensity function for \eqref{PnOPUC} when the weight function is from the Szeg\H{o} weight class.  We say that $W(\theta)\geq 0$ belongs to the \emph{Szeg\H{o} weight class}, denoted by $G$, if $W(\theta)$ is defined and measurable in $[-\pi,\pi]$, and the integrals
$$\int_{-\pi}^{\pi} W(\theta)\ d\theta, \ \ \ \ \int_{-\pi}^{\pi}|\log W(\theta)|\ d\theta$$
exist with the first integral assumed to be positive.  Taking $W(\theta)\in G$ and using limits and asymptotics by Szeg\H{o} in \cite{SZ} (Theorems 11.3.3 on p. 291 and 12.1.1 on p. 297) and modifications of them, we are able to achieve:

\begin{theorem}\label{OPUC1}
Let $W(\theta)\in G$ be an even weight function for the OPUC $\{\phi_j(z)\}$.   Then the intensity function $h_n^P(z)$ for the random orthogonal polynomial \eqref{PnOPUC} spanned by the $\phi_j$'s with complex-valued iid Gaussian coefficients satisfies
\begin{equation}\label{LIOPUC}
\lim_{n\rightarrow \infty}h_n^P(z)= \frac{1}{\pi\left(1-|z|^2 \right)^2}
\end{equation}
for $|z|\neq1$.  Moreover the convergence in \eqref{LIOPUC} holds uniformly on compact subsets of $\{z:|z|\neq 1\}$.
\end{theorem}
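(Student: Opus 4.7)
The plan is to reduce the error term in the formula from Theorem \ref{OPUC} to a simple expression in one auxiliary function and then invoke Szegő's classical asymptotics. Algebraically, set $B_n(z) := \phi_n(z)/\phi_n^*(z)$ and $C_n(z) := \phi_n^*(z)/\phi_n(z)$ on the respective sets where $\phi_n^*$ and $\phi_n$ are nonzero. The quotient rule yields the identity
$$\phi_n^*(z)\,\phi_n'(z) - \phi_n^{*\,\prime}(z)\,\phi_n(z) = (\phi_n^*(z))^2\, B_n'(z) = -(\phi_n(z))^2\, C_n'(z),$$
so the fraction inside the parentheses in Theorem \ref{OPUC} collapses to $|B_{n+1}'(z)|^2/(1-|B_{n+1}(z)|^2)^2$ when we work with $B$, or to the analogous expression in $C_{n+1}$ when we work with $C$. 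It therefore suffices to show that $B_{n+1}(z) \to 0$ uniformly on compact subsets of $\{|z|<1\}$ and that $C_{n+1}(z) \to 0$ uniformly on compact subsets of $\{|z|>1\}$: Weierstrass's theorem on convergence of derivatives of analytic functions then yields uniform convergence of $B_{n+1}'$ and $C_{n+1}'$ to zero, and the locally bounded prefactor $(1-|z|^2)^2$ finishes the argument.

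For the interior of the disk, Szegő's Theorem 12.1.1 in \cite{SZ} supplies $\phi_{n+1}^*(z) \to 1/D(z)$ uniformly on compact subsets of $\{|z|<1\}$, where $D$ is the Szegő function associated to $W$; this limit is bounded and zero-free on such compacts. Combining the Christoffel-Darboux identity at the diagonal,
$$|\phi_{n+1}^*(z)|^2 - |\phi_{n+1}(z)|^2 = (1-|z|^2)\, K_n(z,z),$$
with the (modified) Theorem 11.3.3 limit $(1-|z|^2) K_n(z,z) \to 1/|D(z)|^2$, forces $|\phi_{n+1}(z)|^2 \to 0$ uniformly on compacts of the disk, hence $B_{n+1}(z) \to 0$ there. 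For the exterior, the even-weight hypothesis makes the coefficients of each $\phi_n$ real, so $\phi_{n+1}^*(z) = z^{n+1}\,\phi_{n+1}(1/z)$; applying the interior result at $w=1/z$ (with $|w|<1$) gives $\phi_{n+1}(1/z) \to 0$ uniformly on compacts of $\{|z|>1\}$, while the companion Szegő asymptotic that $z^{-(n+1)}\phi_{n+1}(z)$ converges to a bounded, nonvanishing limit on such compacts then yields $C_{n+1}(z) \to 0$ uniformly.

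The main obstacle is the uniform-on-compacts convergence $|\phi_{n+1}(z)| \to 0$ inside the disk, since Szegő's theorem directly controls only $\phi_{n+1}^*$. The clean way around this is the kernel identity above: both $|\phi_{n+1}^*(z)|^2$ and $(1-|z|^2) K_n(z,z)$ converge uniformly on compact subsets of $\{|z|<1\}$ to $1/|D(z)|^2$, so their difference $|\phi_{n+1}(z)|^2$ must tend to zero uniformly as well. Once this point is in place, the remaining steps are bookkeeping with Weierstrass's theorem together with the observation that $|1-|z|^2|^2$ is locally bounded on $\{|z|\neq 1\}$, delivering the limit $h_n^P(z)\to 1/(\pi(1-|z|^2)^2)$ uniformly on compact subsets of $\{|z|\neq 1\}$.
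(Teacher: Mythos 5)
Your argument is correct, and it takes a genuinely different (and leaner) route than the paper. The paper works directly with the four functions $\phi_{n+1},\phi_{n+1}',\phi_{n+1}^*,\phi_{n+1}^{*\,\prime}$: inside the disk it either passes to the limit in all three kernels via Szeg\H{o}'s (12.3.17), or establishes the four individual limits $\phi_{n+1}\to 0$, $\phi_{n+1}'\to 0$, $\phi_{n+1}^*\to 1/D$, $\phi_{n+1}^{*\,\prime}\to -D'/D^2$; outside the disk it derives an asymptotic for $\phi_{n+1}'$ by showing $e_n'(z)=o(1)$ through Cauchy's formula for unbounded domains and then carries out a fairly long explicit cancellation in the numerator. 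Your Wronskian reduction
$$\phi_{n+1}^*\phi_{n+1}'-\phi_{n+1}^{*\,\prime}\phi_{n+1}=(\phi_{n+1}^*)^2B_{n+1}'=-(\phi_{n+1})^2C_{n+1}'$$
collapses the error term to $\left(1-|z|^2\right)^2\left|B_{n+1}'\right|^2/\bigl(1-|B_{n+1}|^2\bigr)^2$ (resp.\ the $C_{n+1}$ version), so a single application of Weierstrass's theorem replaces all of the derivative asymptotics, including the paper's entire Cauchy-formula digression in the exterior case. Your derivation of $|\phi_{n+1}|^2\to 0$ locally uniformly from the diagonal Christoffel--Darboux identity together with the two Szeg\H{o} limits is also a nice self-contained substitute for the paper's appeal to term-by-term decay of the convergent series $\sum_j|\phi_j(z)|^2$. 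Two small points you should make explicit: (i) to apply Weierstrass on a compact $K\subset\mathbb{D}$ you need $B_{n+1}$ analytic on a neighborhood of $K$, which follows from the standard OPUC fact that all zeros of $\phi_{n+1}$ lie in the open unit disk, so $\phi_{n+1}^*$ is zero-free on $\overline{\mathbb{D}}$ (and, dually, $\phi_{n+1}$ is zero-free on $\{|z|\ge 1\}$, which is what makes $C_{n+1}$ analytic in the exterior) --- your hedge ``on the respective sets where $\phi_n^*$ and $\phi_n$ are nonzero'' should be upgraded to this; (ii) the interior limit $\phi_{n+1}^*\to 1/D$ is Szeg\H{o}'s (12.3.16), not Theorem 12.1.1 (which is the exterior asymptotic you correctly use for $z^{-(n+1)}\phi_{n+1}(z)$). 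Neither point affects the validity of the proof.
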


To prove the above Theorem we consider the separate cases of when $|z|<1$ and $|z|>1$.  When $|z|<1$ we give two different proofs of the limiting value of $h_n^P$.  Both of these proofs are given since they contain asymptotics we derive that might be useful in other applications.  We prove the case when $|z|>1$ by establishing limits and asymptotics, some of which are in the literature and some are not, for all the functions that make up intensity function $h_n^P$ in Theorem \ref{OPUC}.

\section{Derivation of the Intensity Function}

Since the polynomials $\phi_j(z)$, $j=0,1,\dots,n$,  are orthogonal on the unit circle, we can simplify the kernels $K_{n}(z,z)$, $K_{n}^{(0,1)}(z,z)$, and $K_{n}^{(1,1)}(z,z)$ which make up the intensity function $h_n^{P}$ using the Christoffel-Darboux formula for OPUC.  For convenience of the reader, the Christoffel-Darboux formula for OPUC  (Theorem 2.2.7, p. 124 of \cite{BS}) states that for $z,w\in \C$ and $\{\phi_j(z)\}_{j\geq 0}$ OPUC, we have
\begin{equation}\label{CDUP}
\sum_{j=0}^{n}\phi_j(z)\overline{\phi_j(w)}= \frac{\overline{\phi_{n+1}^{*}(w)}\phi_{n+1}^{*}(z)-\overline{\phi_{n+1}(w)}\phi_{n+1}(z)}{1-\bar{w}z},
\end{equation}
where $\phi_n^{*}(z)=z^n \overline{\phi_n\left(\frac{1}{\bar{z}}\right)}$.

Before obtaining our representations of the kernels, let us note that since we are assuming that the weight function $W(\theta)$ is an even function, the polynomials $\phi_j(z)$, $j=0,1,\dots$, have real coefficients.  Thus when using conjugation we have that $\overline{\phi_j(z)}=\phi_j(\bar{z})$ for all $j=0,1,\dots$, and all $z\in \C$.

\begin{proof}[Proof of Theorem \ref{OPUC}]
Using the Christoffel-Darboux Formula to get a representation for $K_{n}(z,z)$, we take $w=z$ in \eqref{CDUP} to achieve
\begin{align}\label{CDBOUP}
K_{n}(z,z)&=\sum_{j=0}^{n}\phi_j(z)\overline{\phi_j(z)} =\frac{\left|\phi_{n+1}^{*}(z)\right|^2-\left|\phi_{n+1}(z)\right|^2}{1-\left|z\right|^2}.
\end{align}

For our representation of $K_{n}^{(0,1)}(z,z)$, we first take the derivative of \eqref{CDUP} with respect to $\bar{w}$.  Taking this derivative and writing $\phi_{n+1}^{*\ \prime}(\bar{w})$ to mean $\frac{d}{d\bar{w}}[\phi_{n+1}^*(\bar{w})]$, it follows that
\begin{align}
\nonumber
\sum_{j=0}^{n}\phi_j(z)\overline{\phi_j^{\prime}(w)}&= \frac{\phi_{n+1}^{*\ \prime}(\bar{w})\phi_{n+1}^{*}(z)-\phi_{n+1}^{\prime}(\bar{w})\phi_{n+1}(z)}{1-\bar{w}z}\\
\label{DWRTW}
&\ \ \  +\frac{z\left(\overline{\phi_{n+1}^{*}(w)}\phi_{n+1}^{*}(z)-\overline{\phi_{n+1}(w)}\phi_{n+1}(z)\right)}{(1-\bar{w}z)^2}.
\end{align}
Setting $w=z$ in the above and recalling \eqref{CDBOUP} we have
\begin{align}\label{CDB1UP}
K_n^{(0,1)}(z,z)&=\sum_{j=0}^{n}\phi_j(z)\overline{\phi_j^{\prime}(z)}=\frac{\overline{\phi_{n+1}^{*\ \prime}(z)}\phi_{n+1}^{*}(z)-\overline{\phi_{n+1}^{\prime}(z)}\phi_{n+1}(z)}{1-|z|^2}
+\frac{zK_n(z,z)}{1-|z|^2}.
\end{align}

To obtain a representation for $K_n^{(1,1)}(z,z)$ we differentiate \eqref{DWRTW} with respect to $z$, then setting $w=z$ and using \eqref{CDBOUP} and \eqref{CDB1UP} gives
\begin{align}\label{CDB2UP}
K_n^{(1,1)}(z,z)&=\sum_{j=0}^n|\phi_j^{\prime}(z)|^2
=\frac{|\phi_{n+1}^{* \ \prime}(z)|^2-|\phi_{n+1}^{\prime}(z)|^2}{1-|z|^2}+ \frac{\bar{z}K_n^{(0,1)}(z,z)
 +z\overline{ K_n^{(0,1)}(z,z)}+K_n(z,z)}{1-|z|^2}.
\end{align}

Using \eqref{CDBOUP}, \eqref{CDB1UP}, and \eqref{CDB2UP}, the numerator of the intensity function from \eqref{thm2.1} simplifies as
\begin{align*}
K_n^{(1,1)}(z,z)K_n(z,z)-&\left|K_n^{(0,1)}(z,z)\right|^2  \\
&=\frac{\left(K_n(z,z)\right)^2}{\left(1-|z|^2\right)^2}-\frac{\left|\phi_{n+1}^*(z)\phi_{n+1}^{\prime}(z)-\phi_{n+1}^{*\ \prime}(z)\phi_{n+1}(z)  \right|^2  }{ \left(1-|z|^2\right)^2  }.
\end{align*}

Therefore, using the above numerator and \eqref{CDBOUP}, the intensity function from \eqref{thm2.1} becomes
\begin{align*}
h_n^P(z)&=\frac{K_{n}^{(1,1)}(z,z)K_{n}(z,z)-\left|K_{n}^{(0,1)}(z,z)\right|^2}{\pi \left(K_{n}(z,z)\right)^2} \\
&=\frac{1}{\pi\left(1-|z|^2 \right)^2}\Bigg(1-\frac{\left(1-|z|^2 \right)^2\left|\phi_{n+1}^{*}(z)\phi_{n+1}^{\prime}(z)-\phi_{n+1}^{*\ \prime}(z)\phi_{n+1}(z)  \right|^2}{\left(\left|\phi_{n+1}^{*}(z)\right|^2-\left|\phi_{n+1}(z)\right|^2\right)^2}   \Bigg)
\end{align*}
which gives the result of Theorem \ref{OPUC}.
\end{proof}

\section{The Limiting Value of the Intensity Function}

As mentioned in the introduction we will prove Theorem \ref{OPUC1} in separate cases when $|z|<1$ and $|z|>1$. We begin by proving the case for when $|z|<1$ and give both methods of proof for this case.

\begin{proof}[Proof of Theorem \ref{OPUC1}]
By equation (12.3.17) on page 303 in \cite{SZ}, when $|z|<1$ and $|w|<1$ we have
\begin{equation}\label{KNL}
\lim_{n\rightarrow \infty}K_n(z,w)=\sum_{j=0}^{\infty}\phi_j(z)\overline{\phi_j(w)}=\frac{1}{(1-\overline{w}z)\overline{D(w)}D(z)},
\end{equation}
where
$$D(\xi)=\exp\left\{ \frac{1}{4\pi}\int_{-\pi}^{\pi}\log W(t) \ \frac{1+\xi e^{-it}}{1-\xi e^{-it}}\ dt \right\},$$
is a uniquely determined function from the non-negative $2\pi$-periodic weight function $W(\theta)$, with
$$\int_{-\pi}^{\pi} W(\theta)\ d\theta >0,$$
that is analytic and nonzero for $|\xi|<1$ with $D(0)>0$.  When $|z|<1$, for the first method of proof  our main tool for expressing the limits of the kernels will be \eqref{KNL}.  We note that to express the limits of the kernels $K_n^{(0,1)}(z,z)$ and $K_n^{(1,1)}(z,z)$ we will have to take derivatives of \eqref{KNL}.  Taking these derivatives is justified since \eqref{KNL} is holomorphic in $z$ and anti-holomorphic in $\overline{w}$, and because the infinite series converges for all $|z|<1$ and $|w|<1$, hence uniformly on compact sets of this domain. Thus  taking derivatives we retain the convergence of the infinite series concerning the derivatives.

With calculations analogous to the derivation of intensity function, using \eqref{KNL} to derive the limiting values of our kernels, we obtain
\begin{align}\label{KNLB0}
\lim_{n\rightarrow \infty}K_n(z,z)&=\sum_{j=0}^{\infty}|\phi_j(z)|^2=\frac{1}{(1-|z|^2)|D(z)|^2}:=K(z,z),\\
\label{KNLB1}
\lim_{n\rightarrow \infty}K_n^{(0,1)}(z,z)&=\sum_{j=0}^{\infty}\phi_j(z)\overline{\phi_j^{\prime}(z)} =\frac{zK(z,z)}{1-|z|^2}-\frac{\overline{D^{\prime}(z)}K(z,z)}{\overline{D(z)}}:=K^{(0,1)}(z,z),\\
\nonumber
\text{and}&\\
\nonumber
\lim_{n\rightarrow \infty}K_n^{(1,1)}(z,z)&=\sum_{j=0}^{\infty}\left|\phi_j^{\prime}(z)\right|^2 \\
\nonumber
&=\frac{K(z,z)+\bar{z}K^{(0,1)}(z,z)+z\overline{K^{(0,1)}(z,z)}}{1-|z|^2}+\frac{|D^{\prime}(z)|^2}{(1-|z|^2)|D(z)|^4}\\
\label{KNLB2}
&:=K^{(1,1)}(z,z).
\end{align}

Taking the limit of the numerator of the intensity function in \eqref{thm2.1}, and then using \eqref{KNLB0}, \eqref{KNLB1}, and \eqref{KNLB2},  we see that
\begin{align*}
\lim_{n\rightarrow \infty}K_{n}^{(1,1)}(z,z)K_{n}(z,z)-\left|K_{n}^{(0,1)}(z,z)\right|^2&=K^{(1,1)}(z,z)K(z,z)-\left|K^{(0,1)}(z,z)\right|^2 \\
&=
\frac{\left(K(z,z)\right)^2}{(1-|z|^2)^2}.
\end{align*}

Therefore passing to the limit of the intensity function in \eqref{thm2.1}, and using the above limit of the numerator and \eqref{KNLB0}, yields
\begin{align*}
\lim_{n\rightarrow \infty}h_n(z)&=\lim_{n\rightarrow \infty} \frac{K_{n}^{(1,1)}(z,z)K_{n}(z,z)-\left|K_{n}^{(0,1)}(z,z)\right|^2}{\pi \left(K_{n}(z,z)\right)^2}
=\frac{1}{\pi\left(1-|z|^2\right)^2},
\end{align*}
and hence proves Theorem \ref{OPUC1} in the case when $|z|<1$.

The other way to prove Theorem \ref{OPUC1} when $|z|<1$ is to take the limit as $n\rightarrow \infty$ of the intensity function from Theorem \ref{OPUC}.  Using \eqref{KNLB0} and \eqref{KNLB2} from the previous approach we have
\begin{equation}\label{phizero}
\lim_{n\rightarrow \infty}\phi_{n+1}(z)=0,
\end{equation}
and
\begin{equation}\label{phiprzero}
\lim_{n\rightarrow \infty}\phi_{n+1}^{\prime}(z)=0,
\end{equation}
for $|z|<1$.

Thus to complete the proof in this approach we need to know what the limits of $\phi_{n+1}^{*}(z)$ and $\phi_{n+1}^{*\ \prime}(z)$ are as $n\rightarrow \infty$. Since $|z|<1$, and the weight function $W(\theta)$ is an even function, which implies $\bar{D}(z)=\overline{D(\bar{z})}=D(z)$, by equation (12.3.16) on page 302 of \cite{SZ} we have
\begin{equation}\label{phistar}
\lim_{n\rightarrow \infty}\phi_{n+1}^*(z)=\left(D(z)\right)^{-1}.
\end{equation}

To find the limiting behavior of $\phi_{n+1}^{*\ \prime}(z)$, as done on page 302 of \cite{SZ}, we start by considering the function
\begin{equation*}
\phi_n^{*}(z)D(z)-1=[D(0)\kappa_n-1]+d_{n1}z+d_{n2}z^2+\cdots:=e_n(z),
\end{equation*}
which is holomorphic for $|z|<1$, and $e_n(z) \rightarrow 0$ uniformly on compact subsets of the unit disk $\mathbb{D}$.  Differentiating both sides of the above we see that
\begin{equation}\label{phistarpr0}
\phi_n^{*\ \prime}(z)D(z)+\phi_n^{*}(z)D^{\ \prime}(z)=d_{n1}+2d_{n2}z+\cdots=e_n^{\prime}(z).
\end{equation}
There are two ways to see that $e_n^{\prime}(z)\rightarrow 0$ uniformly on compact subsets of $\mathbb{D}$.  The first way is observe that since $e_n(z) \rightarrow 0$ uniformly on compact subsets of $\mathbb{D}$, and $e_n(z)$ is holomorphic for each $n$ and all $z\in \mathbb{D}$, by a standard result in complex analysis we have $e_n^{\prime}(z)\rightarrow 0$ uniformly on compact subsets of $\mathbb{D}$.  The second way to see that $e_n^{\prime}(z)\rightarrow 0$ uniformly on compact subsets of $\mathbb{D}$ is to see what the behavior of $e_n^{\prime}(z)$ would be from the proof given on page 302 in \cite{SZ}.  To this end, by Cauchy's inequality and since $|z|<1$ we have
\begin{align}
\nonumber
|d_{n1}+2d_{n2}z+\cdots|^2
&\leq (|d_{n1}|^2+|d_{n2}|^2+\cdots)\sum_{j=1}^{\infty}j^2|z|^{2j-2} \\
\nonumber
&= (|d_{n1}|^2+|d_{n2}|^2+\cdots)\frac{|z|^2+1}{(1-|z|^2)^3} \\
\label{e1}
&\leq \left[2-2(\mathfrak{G}(W))^{\frac{1}{2}}(\mu_n(W))^{-\frac{1}{2}}\right]\frac{|z|^2+1}{(1-|z|^2)^3},
\end{align}
where we are using equation (12.3.13) on page 302 of \cite{SZ} for last inequality, and $\mathfrak{G}(W)$ is the geometric mean of $W(\theta)$, and $\mu_n(W)$ is the minimum of
$$\frac{1}{2\pi}\int_{-\pi}^{\pi}W(\theta)|\rho(e^{i\theta})|^2 d\theta,$$
with $\rho(z)=z^n+a_1z^{n-1}+\cdots +a_n$ being an arbitrary polynomial of degree $n$.  Since $W$ is in the weight class $G$, by  equation (12.3.3) on page 300 of \cite{SZ} we know $\mu_n(W)\rightarrow G(W)$.  Thus \eqref{e1} tends to zero uniformly for $|z|<1$.

Since now we know that $e_n^{\prime}(z)=o(1)$, solving for $\phi_n^{* \ \prime}(z)$ in \eqref{phistarpr0}, taking the limit as $n\rightarrow \infty$ and also using \eqref{phistar}, it follows that
\begin{equation}\label{phistarpr}
\lim_{n\rightarrow \infty}\phi_n^{*\ \prime}(z)=\frac{-D^{\ \prime}(z)}{\left(D(z)\right)^2}.
\end{equation}

Thus from \eqref{phizero}, \eqref{phiprzero}, \eqref{phistar}, and \eqref{phistarpr}, we obtain
\begin{align*}
\lim_{n\rightarrow \infty}\frac{\left|\phi_{n+1}^{*}(z)\phi_{n+1}^{\prime}(z)-\phi_{n+1}^{*\ \prime}(z)\phi_{n+1}(z)  \right|^2}{\left|\phi_{n+1}^{*}(z)\right|^2-\left|\phi_{n+1}(z)\right|^2} =  0.
\end{align*}

Therefore from the above limit and using Theorem \ref{OPUC} we have
\begin{align*}
\lim_{n\rightarrow \infty}h_n^P(z)&=\lim_{n\rightarrow \infty}\frac{1}{\pi\left(1-|z|^2 \right)^2}\left(1-\frac{\left(1-|z|^2\right)^2\left|\phi_{n+1}^{*}(z)\phi_{n+1}^{\prime}(z)-\phi_{n+1}^{*\ \prime}(z)\phi_{n+1}(z)  \right|^2}{\left|\phi_{n+1}^{*}(z)\right|^2-\left|\phi_{n+1}(z)\right|^2}   \right) \\
&=\frac{1}{\pi\left(1-|z|^2 \right)^2},
\end{align*}
which completes the other way to prove Theorem \ref{OPUC1} when $|z|<1$.

We will now prove Theorem \ref{OPUC1} when $|z|>1$.  Recall that since the weight function $W(\theta)$ is an even function, it follows that the functions $\phi_{n+1}$ have real coefficients.  Thus $\phi_{n+1}^{*}(z)=z^{n+1}\overline{\phi_n(\bar{z}^{-1})}=z^{n+1}\phi_{n+1}(z^{-1})$. Differentiating  this equation gives us
\begin{equation}\label{dphistar}
\phi_{n+1}^{*\ \prime}(z)=(n+1)z^{n}\phi_{n+1}(z^{-1})-z^{n-1}\phi_{n+1}^{\prime}(z^{-1}).
\end{equation}

Note that by $|z|>1$, we have $|z^{-1}|<1$.  Thus using \eqref{KNLB0} and \eqref{KNLB2} again yields
\begin{equation}\label{phizero2}
\lim_{n\rightarrow \infty}\phi_{n+1}(z^{-1})=0, \ \ \ \text{and}\ \ \ \lim_{n\rightarrow \infty}\phi_{n+1}^{\prime}(z^{-1})=0.
\end{equation}

Since $W(\theta)$ is an even function, so that $\bar{D}(z^{-1}))=\overline{D(1/\bar{z})}=D(z^{-1})$, which is also in the Szeg\H{o} weight class, by Theorem 12.1.1 in \cite{SZ} on page 297
we have
\begin{equation}\label{phiasy1}
\lim_{n\rightarrow \infty}\frac{\phi_{n+1}(z)D(z^{-1})}{z^{n+1}}=1,
\end{equation}
and the convergence holds uniformly on $\{z:|z|\geq R>1\}$.
From the above limit we achieve
$$\phi_{n+1}(z)D(z^{-1})z^{-(n+1)}-1=e_n(z),$$
where $e_n(z)\rightarrow 0$ as $n\rightarrow \infty$ uniformly on compact subsets of $\{z:|z|\geq r>1\}$.  Since $D(z^{-1})$ is holomorphic on $\C\setminus \overline{\D}$, taking the derivative of the above yields
\begin{equation}\label{dphiasy0}
\frac{\phi_{n+1}^{\prime}(z)D(z^{-1})}{z^{n+1}}+\frac{\phi_{n+1}(z)\frac{d}{dz}D(z^{-1})}{z^{n+1}}
-\frac{(n+1)\phi_{n+1}(z)D(z^{-1})}{z^{n+2}}=e_n^{\prime}(z).
\end{equation}

We seek to show that $\lim_{n\rightarrow \infty}e_{n}^{\ \prime}(z)=0$ uniformly on compact subsets of $\{z:|z|\geq r >1\}$.
Observe that
\begin{align*}
\lim_{z \rightarrow \infty}e_n(z)&
=\lim_{z \rightarrow \infty}\left[\left(\kappa_{n+1}z^{n+1}+\kappa_{n}z^{n}+\cdots +\kappa_1z+\kappa_0\right)D(z^{-1})z^{-(n+1)}-1\right] \\
&=  \left\{
        \begin{array}{ll}
            \kappa_1D(0)-1, & \quad n = 0, \\
            \kappa_{n+1}D(0)-1, & \quad n > 0.
        \end{array}
    \right.
\end{align*}
Since both $\kappa_1D(0)-1$ and $\kappa_{n+1}D(0)-1$ exits and are finite, we have that the $\lim_{z\rightarrow \infty}e_n(z)$ exits for all $n=0,1\dots$. Consequently $e_n(z)$ is holomorphic at infinity for all $n\in \N \cup \{0\}$.

From the previous argument we see that $e_n(z)$ is holomorphic on $\overline{\C}\setminus \overline{\D}$. Thus we can apply Cauchy's Formula for unbounded domains to $e_n(z)$ on the unbounded set $\{z:|z|\geq r >1\}$. Let $K\subset \{z:|z|\geq r >1\}$ be a compact set with $w\in K$.  By Cauchy's Formula for unbounded domains we have
$$e_n(w)=\frac{-1}{2\pi i}\int_{\partial D(0,r)}\frac{e_n(t)}{t-w} \ dt +\lim_{w \rightarrow \infty} e_n(w).$$
Taking the derivative of the above with respect to $w$ and estimating yields
\begin{align*}
|e_n^{\prime}(w)|&=\left| \frac{-1}{2\pi i}\int_{\partial D(0,r)}\frac{e_n(t)}{(t-w)^2} \ dt \right|\\
&\leq \frac{1}{2\pi }\int_{\partial D(0,r)}\left|\frac{e_n(t)}{(t-w)^2}\right| \ d|t| \\
&\leq\frac{r}{(\text{Dist}(K,\partial D(0,r)))^2}\ \displaystyle\max_{t\in \partial D(0,r)}|e_n(t)|\rightarrow 0,
\end{align*}
uniformly given that $e_n(w)=o(1)$ uniformly as $n\rightarrow \infty$ for $w\in\{z:|z|\geq r >1\}$.

Therefore by $K$ being an arbitrary compact subset of $\{z:|z|\geq r >1\}$, it follows that $\lim_{n\rightarrow \infty}e_n^{\ \prime}(w)=0$ uniformly on compact subsets of $\{z:|z|\geq r >1\}$.

Using that $e_n^{\prime}(z)=o(1)$, solving for $\phi_{n+1}^{\prime}(z)$ in \eqref{dphiasy0} yields
\begin{equation}\label{dphiasy1}
\phi_{n+1}^{\prime}(z)=-\frac{\phi_{n+1}(z)\frac{d}{dz}D(z^{-1})}{D(z^{-1})}+\frac{(n+1)\phi_{n+1}(z)}{z}+o(z^n).
\end{equation}

We now have all the limits and asymptotics to start the calculation that will show
$$\lim_{n\rightarrow \infty}\frac{\left|\phi_{n+1}^{*}(z)\phi_{n+1}^{\prime}(z)-\phi_{n+1}^{*\ \prime}(z)\phi_{n+1}(z)  \right|^2}{\left(\left|\phi_{n+1}^{*}(z)\right|^2-\left|\phi_{n+1}(z)\right|^2\right)^2}=0.$$
Let us begin by observing that
\begin{align}
\nonumber
&\frac{\left|\phi_{n+1}^{*}(z)\phi_{n+1}^{\prime}(z)-\phi_{n+1}^{*\ \prime}(z)\phi_{n+1}(z)  \right|^2}{\left(\left|\phi_{n+1}^{*}(z)\right|^2-\left|\phi_{n+1}(z)\right|^2\right)^2}
=\frac{\left|\phi_{n+1}^{*}(z)\phi_{n+1}^{\prime}(z)-\phi_{n+1}^{*\ \prime}(z)\phi_{n+1}(z)  \right|^2}{\left(\left|z^{n+1}\phi_{n+1}(z^{-1})\right|^2-\left|\phi_{n+1}(z)\right|^2\right)^2 }\\
\label{tgt}
&\ \ \ \ \ \ \ \ \ \ \ \ \ \ \ \ \ =\frac{\frac{1}{|z^{2(n+1)}|^2}\left|\phi_{n+1}^{*}(z)\phi_{n+1}^{\prime}(z)-\phi_{n+1}^{*\ \prime}(z)\phi_{n+1}(z)  \right|^2}{\left(\left|\phi_{n+1}(z^{-1})\right|^2-\left|\frac{\phi_{n+1}(z)}{z^{n+1}}\right|^2\right)^2}
\end{align}
By the first limit in \eqref{phizero2} and the limit \eqref{phiasy1}, for the denominator above we have
\begin{equation}\label{tgtdm}
\lim_{n\rightarrow \infty}\left(\left|\phi_{n+1}(z^{-1})\right|^2-\left|\frac{\phi_{n+1}(z)}{z^{n+1}}\right|^2\right)^2=\frac{1}{\left|D(z^{-1})\right|^4}<\infty.
\end{equation}

Turning now to the numerator in \eqref{tgt}, using \eqref{dphistar} then \eqref{dphiasy1} we see that
\begin{align}
\nonumber
&\frac{1}{|z^{2(n+1)}|^2}\left|\phi_{n+1}^{*}(z)\phi_{n+1}^{\prime}(z)-\phi_{n+1}^{*\ \prime}(z)\phi_{n+1}(z)  \right|^2\\
\nonumber
&=\frac{1}{|z^{2(n+1)}|^2}\Big|z^{n+1}\phi_{n+1}(z^{-1})\phi_{n+1}^{\prime}(z)-(n+1)z^n\phi_{n+1}(z^{-1})\phi_{n+1}(z)  +z^{n-1}\phi_{n+1}^{\prime}(z^{-1})\phi_{n+1}(z)  \Big|^2\\
\nonumber
&=\Big|\frac{\phi_{n+1}(z^{-1})\phi_{n+1}^{\prime}(z)}{z^{n+1}}-\frac{(n+1)\phi_{n+1}(z^{-1})\phi_{n+1}(z)}{z^{n+2}}  +\frac{\phi_{n+1}^{\prime}(z^{-1})\phi_{n+1}(z)}{z^{n+3}}  \Big|^2\\
\nonumber
&=\Big|-\frac{\phi_{n+1}(z^{-1})\phi_{n+1}(z)\frac{d}{dz}D(z^{-1})}{z^{n+1}D(z^{-1})}
+\frac{(n+1)\phi_{n+1}(z^{-1})\phi_{n+1}(z)}{z^{n+2}}\\
\nonumber
&\ \ \ \ +\frac{\phi_{n+1}(z^{-1})o(z^n)}{z^{n+1}}-\frac{(n+1)\phi_{n+1}(z^{-1})\phi_{n+1}(z)}{z^{n+2}} +\frac{\phi_{n+1}^{\prime}(z^{-1})\phi_{n+1}(z)}{z^{n+3}}  \Big|^2\\
\label{tgtnum}
&=\Big|-\frac{\phi_{n+1}(z^{-1})\phi_{n+1}(z)\frac{d}{dz}D(z^{-1})}{z^{n+1}D(z^{-1})}
+\frac{\phi_{n+1}^{\prime}(z^{-1})\phi_{n+1}(z)}{z^{n+3}}+o(1)  \Big|^2,
\end{align}
where we have used that $z^{-(n+1)}\phi_{n+1}(z^{-1})o(z^n)=o(1)$ in the last equality.

By the first limit in \eqref{phizero2} and the limit \eqref{phiasy1}, we have
\begin{equation}\label{tgtnum1}
\lim_{n\rightarrow \infty}-\frac{\phi_{n+1}(z^{-1})\phi_{n+1}(z)\frac{d}{dz}D(z^{-1})}{z^{n+1}D(z^{-1})}=0.
\end{equation}
From the second limit in \eqref{phizero2} and again \eqref{phiasy1}, it follows  that
\begin{equation}\label{tgtnum2}
\lim_{n\rightarrow \infty}\frac{\phi_{n+1}^{\prime}(z^{-1})\phi_{n+1}(z)}{z^{n+3}}=0.
\end{equation}
Combining \eqref{tgtdm}, \eqref{tgtnum}, \eqref{tgtnum1}, and \eqref{tgtnum2}, shows
$$\lim_{n\rightarrow \infty}\frac{\left|\phi_{n+1}^{*}(z)\phi_{n+1}^{\prime}(z)-\phi_{n+1}^{*\ \prime}(z)\phi_{n+1}(z)  \right|^2}{\left(\left|\phi_{n+1}^{*}(z)\right|^2-\left|\phi_{n+1}(z)\right|^2\right)^2}=0.$$

Therefore using the above limit and Theorem \ref{OPUC} for the intensity function $h_n^P(z)$, we see that
\begin{align*}
\lim_{n\rightarrow \infty} h_n^P(z)&=\lim_{n\rightarrow \infty}\frac{1}{\pi\left(1-|z|^2 \right)^2}\left(1-\frac{\left(1-|z|^2\right)^2\left|\phi_{n+1}^{*}(z)\phi_{n+1}^{\prime}(z)-\phi_{n+1}^{*\ \prime}(z)\phi_{n+1}(z)  \right|^2}{\left(\left|\phi_{n+1}^{*}(z)\right|^2-\left|\phi_{n+1}(z)\right|^2\right)^2}   \right) \\
&=\frac{1}{\pi\left(1-|z|^2 \right)^2},
\end{align*}
which gives the limiting case for $|z|>1$.

Combining the limiting cases when $|z|<1$ and $|z|>1$, we have now established result of Theorem \ref{OPUC1}.
\end{proof}

\textbf{Acknowledgements.}  The author would like to thank his advisor Igor Pritsker for all his help with the project and for helping with financial support through his grant from the National Security Agency.  The author would also like to thank Jeanne LeCaine Agnew Endowed Fellowship and the Vaughn Foundation via Anthony Kable for financial support.

\end{document}